\providecommand{\U}[1]{\protect\rule{.1in}{.1in}}
\newtheorem{theorem}{Theorem}
\newenvironment{proof}[1][Proof]{\noindent\textbf{#1.} }{\ \rule{0.5em}{0.5em}}
\begin{document}
	\title{Exact solution of the (2+1)-dimensional damping forcing  coupled Burgers equation by using   Darboux transformation}
	\author[1] {Prasanta Chatterjee\thanks{\href{mailto:@gmail.com}{@gmail.com}}}
	\author[1] {Nanda Kanan Pal\thanks{\href{mailto:@gmail.com}{@gmail.com}}}
	\author[1] {Dipan Saha\thanks{\href{mailto:@gmail.com}{@gmail.com}}}
	\author[2] {SANTANU RAUT\thanks{Present address: \href{mailto:raut\_santanu@yahoo.com}{raut\_santanu@yahoo.com}}}
	\affil[1]{Department of Mathematics, Visva-Bharati University, Santiniketan, 731235, India  India}
	\affil[2]{ Department of Mathematics, Mathabhanga College, Cooch Behar, West Bengal-736164, India.}
	%	\affil[3]{Department of Mathematics, Zhejiang Normal University, Jinhua 321004, Zhejiang, China}
	%	\affil[4]{Department of Mathematics, King Abdulaziz University, Jeddah 21589, Saudi Arabia}
	%	\affil[5]{Department of Mathematics and Statistics, University of South Florida, Tampa, FL 33620-5700, USA}
	%	\affil[6]{School of Mathematical and Statistical Sciences, North-West University, Mafikeng Campus, Private Bag X2046, Mmabatho 2735, South Africa}
	\date{}
	\maketitle
	\begin{abstract}
	In this article, we investigate the (2+1)-dimensional damping forcing coupled Burgers equation, which is obtain by adding damping and forcing terms from couple Burgers equation. The Lax pair of the (2+1)-dimensional damping forcing coupled Burgers equation is establised. With the help of Lax pair, we derive the $N$-fold Darboux transformation of (2+1)-dimensional damping forcing coupled Burgers equation. Using one fold and two fold Darboux transformation, we demostrated some wave solutions including solitary wave solution and periodic wave solution. The impact of damping and forcing terms in solitary wave solution and periodic solution is graphically demostrated. 
	\end{abstract}
	\textbf{KEYWORDS}: {Damped force Burger equation, Lax pair, Darbox tarnsformation, Solitary wave solution, Periodic solution.
	}
	\section{Introduction}
Integrable or non-integrable linear and nonlinear differential equations are very essential to study the various features of nonlinear physical phenomena in real life problems. To study the nonlinear physical phenomena, the exact solutions of the linear and nonlinear  differential equations plays an important role in various branch of science as physics, chemistry, biology, engineering and economics etc. The theory is well developed to construct the exact solutions of the linear and nonlinear differential equations by various analytic methods as Darboux transformation \cite{darbox_1}, inverse scattering method \cite{inv1}-\cite{AKNS}, Hirota bilinear method \cite{hirota1}, algebro-geometric method \cite{algeb1}-\cite{algeb2}, and Lie group method \cite{sk}, etc. The (2+1)-dimensional damping forcing coupled Burgers (DFCB) equation read as
\begin{equation}
\begin{split}
&u_{t}-2u_{y}+\frac{3}{2}u_{xx}-3v_{x}-S(t)u=T(t),\\
&v_{t}-2v_{y}-u_{y}+u_{xxx}+P(t)uu_x-\frac{3}{2}v_{xx}+R(t)u_x-S(t)v=T(t),\label{dfcb}
\end{split}
\end{equation}
where $u=u(x,y,t), v=v(x,y,t)$, $P(t)=\frac{1}{\Lambda(t)}$, $R(t)=-\frac{H(t)}{\Lambda(t)}$, $S(t)=\frac{\Lambda'(t)}{\Lambda(t)}$ and $T(t)=H'(t)-\frac{\Lambda'(t)}{\Lambda(t)}H(t)$.\\
The (2+1)-dimensional DFCB equation \eqref{dfcb} was modelled by introducing damping term $ S(t)u$ and forcing term $T(t)$ from couple Burgers equation \cite{Tingsu}. The main object in this paper  is to study the (2+1)-dimensional damping forcing coupled Burgers equation by using Darboux tranformation and Lax pair. The existence of Lax pair of a linear or nonlinear differential equation ensure that the equation is integrable in the sence of Lax pair \cite{P.D.Lax}. The Darboux transformation is a powerful tool to construct various prestigious solutions such as solitary wave solution, periodic solution, positon solution, negaton solution and complexiton solution, etc. of the nonlinear differential equation \cite{nk1}-\cite{pc}.\\
The present article is organized as follows. In section 2, we construct the Lax pair of (2+1)-dimensional damping forcing coupled Burgers equation and confirmed its intigribility. In section 3, we derived the $N$- fold Darboux transformation of (2+1)-dimensional damping forcing coupled Burgers equation via its Lax pair. Finally in section 4, we construct some wave solutions especally solitary wave solution and periodic wave solution by using one fold and two fold Darboux transformation.
\section{Lax pair of damping forcing couple Burger equation}
	In this section, we demostrated the Lax pair of the DFCB equation. The Lax pair of the DFCB equation defined as
\begin{equation}
L\psi=\psi_y,\hspace{5mm}\psi_t=M\psi,\label{laxpair}
\end{equation}
where
\begin{equation}
\begin{split}
&L=\partial_x^3+\frac{u-H(t)}{\Lambda(t)}\partial_x+\frac{v-H(t)}{\Lambda(t)}I,~~ M=\partial_x^3+\frac{3}{2}\partial^2_x+\frac{u-H(t)}{\Lambda(t)}\partial_x+\partial_y+\frac{u+v-2H(t)}{\Lambda(t)}I.
\end{split}
\end{equation}
Here $L$ and $M$ is called the Lax operators and $I$ read as the identity operator. By the compatibilty conditions $\psi_{xxt}=\psi_{txx}$ and , the equations \eqref{laxpair} reduced to zero curvatutre equation $(L_t-M_y+LM-ML)=0$, which is equivalent to DFCB equation \eqref{dfcb}. Thus the DFCB equation \eqref{dfcb} is integrable through Lax pair.
\section{Darboux transformation}
Darboux transformation takes an important role in building multi-soliton solutions via the old solutions of integrable systems. From a given linear system of integrable systems, these transformations are inferred. We first go over a few theorems and propositions connected to the current research before attempting to deduce the solutions to the DFCB equation.
\begin{theorem}
	Suppose $\psi_{1}$ is a non zero particular solution of Lax pair \eqref{laxpair}. Then  $\psi[1]=(\partial_x-\partial_x\log\psi_1)\psi$ is the new eigen function which obeys the Lax pair $L[1]\psi[1]=\psi_y[1] $,\quad $\Psi_t[1]=M[1]\psi[1]$ corresponding to the potential,  
	\begin{equation}
	u[1]=u+3\Lambda(t)\partial^{2}_x\log\psi_1,\hspace{3mm} v[1]=v+u_x+3\Lambda(t)(\partial_x\log\psi_1\partial^{2}_x\log\psi_1+\partial^{3}_x\log\psi_1).\label{Ns}
	\end{equation}
	where 
\begin{equation}
\begin{split}
&L[1]=\partial_x^3+\frac{u[1]-H(t)}{\Lambda(t)}\partial_x+\frac{v[1]-H(t)}{\Lambda(t)}I,~~ M[1]=\partial_x^3+\frac{3}{2}\partial^2_x+\frac{u[1]-H(t)}{\Lambda(t)}\partial_x+\partial_y+\frac{u[1]+v[1]-2H(t)}{\Lambda(t)}I.
\end{split}
\end{equation}	
	
\end{theorem}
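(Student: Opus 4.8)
The plan is to reduce the statement to two operator intertwining (Darboux-covariance) identities and verify them by expanding in powers of $\partial_x$, using the constraints that the seed $\psi_1$ imposes. Put $\sigma:=\partial_x\log\psi_1=\psi_{1,x}/\psi_1$ and $T:=\partial_x-\sigma$, so that $\psi[1]=T\psi$ and $T\psi_1=0$. To lighten notation set $p:=\frac{u-H(t)}{\Lambda(t)}$, $q:=\frac{v-H(t)}{\Lambda(t)}$, and let $p[1],q[1]$ denote the corresponding combinations of $u[1],v[1]$, so that $L=\partial_x^3+p\,\partial_x+q$, $M=\partial_x^3+\tfrac32\partial_x^2+p\,\partial_x+\partial_y+(p+q)I$, and similarly for $L[1],M[1]$ (note $\frac{u+v-2H}{\Lambda}=p+q$). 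I claim it is enough to prove
\[
L[1]\,T=T\,L+T_y,\qquad M[1]\,T=T\,M+T_t,
\]
where $T_y=-\sigma_y$ and $T_t=-\sigma_t$ act by multiplication. Granting these, if $\psi$ solves \eqref{laxpair} then $\psi[1]_y=(T\psi)_y=T_y\psi+T\psi_y=(T_y+TL)\psi=L[1]T\psi=L[1]\psi[1]$, and similarly $\psi[1]_t=M[1]\psi[1]$; this simultaneously shows $\psi[1]$ is a genuine eigenfunction for a Lax pair of the stated shape.

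The second ingredient is to record what $\psi_1$ forces on $\sigma$. Using $\psi_{1,x}=\sigma\psi_1$, $\psi_{1,xx}=(\sigma_x+\sigma^2)\psi_1$, and $\psi_{1,xxx}=(\sigma_{xx}+3\sigma\sigma_x+\sigma^3)\psi_1$, division of $L\psi_1=\psi_{1,y}$ by $\psi_1$ gives the Riccati relation
\[
\sigma_{xx}+3\sigma\sigma_x+\sigma^3+p\,\sigma+q=\rho,\qquad \rho:=\partial_y\log\psi_1,
\]
and commutation of mixed partials gives $\sigma_y=\rho_x$, hence an explicit expression for $\sigma_y$ by differentiating the last display in $x$. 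Similarly, dividing $\psi_{1,t}=M\psi_1$ by $\psi_1$ yields a second Riccati relation, with the same $\rho$ and $\mu:=\partial_t\log\psi_1$, together with $\sigma_t=\mu_x$, hence an explicit expression for $\sigma_t$.

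Now I would carry out the verification. Expanding $L[1]T-TL-T_y$ as a differential operator, the coefficients of $\partial_x^4$ and $\partial_x^3$ vanish identically; the coefficient of $\partial_x^2$ is $p[1]-p-3\sigma_x$, forcing $p[1]=p+3\sigma_x$, i.e. $u[1]=u+3\Lambda(t)\partial_x^2\log\psi_1$; substituting this, the coefficient of $\partial_x$ is $q[1]-q-p_x-3\sigma_{xx}-3\sigma\sigma_x$, forcing $q[1]=q+p_x+3\sigma_{xx}+3\sigma\sigma_x$, which (since $p_x=u_x/\Lambda(t)$) is exactly the formula for $v[1]$ in \eqref{Ns}; and the remaining $\partial_x^0$ term collapses to $0$ upon inserting the expression for $\sigma_y$ obtained above. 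This establishes the first identity and identifies $u[1],v[1]$. For the second identity I would run the same expansion on $M[1]T-TM-T_t$, now carrying the $\tfrac32\partial_x^2$, $\partial_y$, and $(p+q)I$ pieces of $M$; the coefficients of $\partial_x^4,\partial_x^3,\partial_x\partial_y$ and of $\partial_y$ vanish identically, the coefficients of $\partial_x^2$ and $\partial_x$ reproduce the same conditions $p[1]=p+3\sigma_x$ and $q[1]=q+p_x+3\sigma_{xx}+3\sigma\sigma_x$ already obtained, and the $\partial_x^0$ term vanishes after substituting both $\sigma_y$ and $\sigma_t$ from the two Riccati relations.

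The main obstacle is precisely this last cancellation on the $M$-side. A priori the two intertwining identities each constrain $(u[1],v[1])$, so one must check the constraints are compatible; the $\partial_x^2$ and $\partial_x$ coefficients fortunately give identical conditions, but the constant term on the $M$-side does not vanish from the first Riccati relation alone — it needs the second one as well, and the fact that everything then cancels is nothing but the compatibility of $L\psi_1=\psi_{1,y}$ with $\psi_{1,t}=M\psi_1$, equivalently the zero-curvature condition $L_t-M_y+LM-ML=0$ that is the DFCB system \eqref{dfcb}. Apart from that, the proof is routine: all the manipulations are differentiations of $\log\psi_1$ and collection of like terms.
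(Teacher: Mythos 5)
Your proposal is correct, and at its core it is the same verification the paper performs: expand the would-be covariance relation, read off the vanishing of the $\partial_x^2$ and $\partial_x$ coefficients to force $u[1]=u+3\Lambda(t)\partial_x^2\log\psi_1$ and $v[1]=v+u_x+3\Lambda(t)(\partial_x\log\psi_1\,\partial_x^2\log\psi_1+\partial_x^3\log\psi_1)$. The packaging differs: the paper substitutes $\psi[1]=\psi_x+B\psi$ (with $B=-\partial_x\log\psi_1$) and its derivatives into $\Delta=L[1]\psi[1]-\psi_y[1]$ and $\Delta'=\psi_t[1]-M[1]\psi[1]$, expands in $\psi_{xx},\psi_x,\psi$, and defines the new potentials by $\Delta_1=\Delta_2=0$; you instead phrase covariance as the operator intertwining identities $L[1]T=TL+T_y$ and $M[1]T=TM+T_t$ with $T=\partial_x-\sigma$, $\sigma=\partial_x\log\psi_1$, and expand in powers of $\partial_x$. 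These are equivalent in substance, but your version buys something the paper omits: you explicitly close the argument at order $\partial_x^0$, using the two Riccati relations obtained by dividing $L\psi_1=\psi_{1,y}$ and $\psi_{1,t}=M\psi_1$ by $\psi_1$ (together with $\sigma_y=\rho_x$, $\sigma_t=\mu_x$) to show the residual terms cancel — this is exactly the unverified step in the paper, where $\Delta_3=0$ and $\Delta_1'=\Delta_2'=\Delta_3'=0$ are needed for the conclusion but are never checked, the proof passing directly from $\Delta_1=\Delta_2=0$ to the assertion of the new Lax pair. Your check also confirms, as you note, that the $\Delta'$-side gives no new constraints on $(u[1],v[1])$, i.e. the $x$- and $t$-parts are compatible; strictly speaking the constant-term cancellation follows already from the two Riccati relations themselves (the fact that $\psi_1$ solves both members of \eqref{laxpair}), so invoking the zero-curvature condition is not an extra hypothesis but a reformulation of the same input. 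So: same strategy as the paper, executed at operator level and carried to completion where the paper is only schematic.
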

\begin{proof}
Let the linear tranformation that is called Darboux transformation defined on $\psi$ in \eqref{laxpair} by
\begin{equation}
	\psi[1]=(\partial_x-\partial_x\log\psi_1)\psi=\psi_{x}+B\psi, \label{DT}
\end{equation}
where, $B=-\partial_x\log\psi_1$ and $\psi_{1}$ is a non zero particular solution of Lax pair \eqref{laxpair}.
From \eqref{DT}, we have
\begin{equation}
\begin{split}
&\psi[1]=\psi_x+B\psi,~\psi_x[1]=\psi_{xx}+B\psi_x+B_x\psi,~\psi_{xx}[1]=\psi_{xxx}+B\psi_{xx}+2B_x\psi_x+B_{xx}\psi,\\&\psi_{xxx}[1]=\psi_{yx}+(3B_x-\frac{u-H(t)}{\Lambda(t)})\psi_{xx}+(3B_{xx}-\frac{v-H(t)}{\Lambda(t)}-\frac{u_x}{\Lambda(t)}-\frac{u-H(t)}{\Lambda(t)}B)\psi_x\\&+(B_{xxx}-\frac{v-H(t)}{\Lambda(t)}B-\frac{v_x}{\Lambda(t)})\psi+B\psi_y,~~
\psi_y[1]=\psi_{xy}+B\psi_y+B_y\psi.
\end{split}
\end{equation}
We construct two expressions $\Delta$ and $\Delta'$ such that
\begin{equation}
\Delta=L[1]\psi[1]-\psi_y[1],\hspace{5mm}\Delta'=\psi_t[1]-M[1]\psi[1],\label{del1}
\end{equation}
where 
\begin{equation}
\begin{split}
&L[1]=\partial_x^3+\frac{u[1]-H(t)}{\Lambda(t)}\partial_x+\frac{v[1]-H(t)}{\Lambda(t)}I,~ M[1]=\partial_x^3+\frac{3}{2}\partial^2_x+\frac{u[1]-H(t)}{\Lambda(t)}\partial_x+\partial_y+\frac{u[1]+v[1]-2H(t)}{\Lambda(t)}I.
\end{split}
\end{equation}
Substituting the values of $\psi[1]$ and its spatial derivatives $\psi_x[1]$,  $\psi_{xx}[1]$, $\psi_{xxx}[1]$, and $\psi_{y}[1]$, into equation \eqref{del1}, we have
\begin{equation}
\Delta=\Delta_1\psi_{xx}+\Delta_2\psi_{x}+\Delta_3\psi, \quad \Delta'=\Delta'_1\psi_{xx}+\Delta'_2\psi_{x}+\Delta'_3\psi,
\end{equation}
where
\begin{equation}
\begin{split}
&\Delta_1=(3B_x-\frac{u-H(t)}{\Lambda(t)}+\frac{u[1]-H(t)}{\Lambda(t)}),~
\Delta_2=(3B_{xx}-\frac{v-H(t)}{\Lambda(t)}-\frac{u_x}{\Lambda(t)}-\frac{u-H(t)}{\Lambda(t)}B+\frac{u[1]-H(t)}{\Lambda(t)}B+\frac{v[1]-H(t)}{\Lambda(t)}),\\
&\Delta_3=(B_{xxx}-\frac{v-H(t)}{\Lambda(t)}B-\frac{v_x}{\Lambda(t)}+\frac{u[1]-H(t)}{\Lambda(t)}B_x+\frac{v[1]-H(t)}{\Lambda(t)}B-B_y).
\end{split}
\end{equation}
and 
\begin{equation}
\begin{split}
&\Delta'_1=(\frac{u-H(t)}{\Lambda(t)}-3B_x-\frac{u[1]-H(t)}{\Lambda(t)}),\\
&\Delta'_2=\frac{v-H(t)}{\Lambda(t)}+\frac{u-H(t)}{\Lambda(t)}+\frac{u-H(t)}{\Lambda(t)}B-3B_x-\frac{u[1]-H(t)}{\Lambda(t)}B+\frac{u_x}{\Lambda(t)}-\frac{u[1]+v[1]-2H(t)}{\Lambda(t)},\\
&\Delta'_3=B_t+\frac{u-H(t)}{\Lambda(t)}B-B_{3x}+\frac{v-H(t)}{\Lambda(t)}B-\frac{3}{2}B_{2x}-\frac{u[1]-H(t)}{\Lambda(t)}B_x+\frac{u_x+v_x}{\Lambda(t)}-\frac{u[1]+v[1]-2H(t)}{\Lambda(t)}B-B_y.
\end{split}
\end{equation}
For new potential we consider $\Delta_1=0$ and $\Delta_2=0$, so the new potential becomes 
\begin{equation}
\begin{split}
&u[1]=u-3\Lambda(t)B_x=u+3\Lambda(t)\partial^{2}_x\log\psi_1,\\
&v[1]=v+u_x+3\Lambda(t)(BB_x-B_{xx})=v+u_x+3\Lambda(t)(\partial_x\log\psi_1\partial^{2}_x\log\psi_1+\partial^{3}_x\log\psi_1).
\end{split}
\end{equation}
Therefore the new Lax pair corresponding to new potentials $u[1]$ and
$v[1]$ becomes
\begin{equation}
 L[1]\psi[1]=\psi_y[1],\hspace{2mm}\psi_t[1]=M[1]\psi[1]. \label{nlaxpair}
\end{equation}
 Hence by the compatibilty condition, we can ensure that $u[1]$ and
$v[1]$ are the new exact solution of DFCB equation corresponding to old solution $u$ and $v$ of the DFCB equation.\\
This complete the proof.
\end{proof}
\begin{theorem}
	Let $\psi_{1}$, $\psi_{2}$, $\psi_{3}$, $\cdots$, $\psi_{N}$ be the non-zero particular solutions of the linear equations \eqref{laxpair}. Then the new eigen function  $\psi[N]=\frac{W(\psi_{1},\psi_{2},\psi_{3},\cdots,\psi_{N},\psi)}{W(\psi_{1},\psi_{2},\psi_{3},\cdots,\psi_{N})}$ satisfy the Lax pair  $L[N]\psi[N]=\psi_y[N] $,\quad $\psi_t[N]=M[N]\psi[N]$ corresponding to the potential stated below
\begin{equation}
\begin{split}
&u[N]=u+3\Lambda(t)\partial^{2}_{x}\log W(\psi_{1},\psi_{2},\psi_{3},\cdots,\psi_{N}),\\
&v[N]=v+Nu_x+3\Lambda(t)\sum_{r=1}^{N}\bigg(\partial_x\log \frac{W(\psi_1,\psi_2,\cdots,\psi_r)}{W(\psi_1,\psi_2,\cdots,\psi_{r-1})}\partial^{2}_x\log\frac{W(\psi_1,\psi_2,\cdots,\psi_r)}{W(\psi_1,\psi_2,\cdots,\psi_{r-1})}+\partial^{3}_x\log W(\psi_1,\psi_2,\cdots,\psi_r)\bigg)\label{Nsol}
\end{split}
\end{equation}
where
\begin{equation}
\begin{split}
&L[N]=\partial_x^3+\frac{u[N]-H(t)}{\Lambda(t)}\partial_x+\frac{v[N]-H(t)}{\Lambda(t)}I,~ M[N]=\partial_x^3+\frac{3}{2}\partial^2_x+\frac{u[N]-H(t)}{\Lambda(t)}\partial_x+\partial_y+\frac{u[N]+v[N]-2H(t)}{\Lambda(t)}I.
\end{split}
\end{equation}
and 	$W(\psi_{1},\psi_{2},\psi_{3},\cdots,\psi_{K})=\text{det}(\theta_1, \theta_2, \theta_3,\cdots, \theta_K)$, $\theta_j=(\psi_j,\partial_{x}\psi_j,\partial^2_{x}\psi_j,\cdots,\partial^{K-1}_{x}\psi_j)^T$ is the column vector forming by $\psi_j$ and its partial derivatives upto $(K-1)$ th order with respect to the variable $x$ , $j=1,2,3,\cdots, N$
\end{theorem}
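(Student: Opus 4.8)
The plan is to prove the statement by induction on $N$, invoking Theorem~1 at each step, and to identify the $N$-fold iterated Darboux transformation with the claimed ratio of Wronskians via a Crum-type determinant identity. Throughout I write $W_r:=W(\psi_1,\dots,\psi_r)$ and $W_r[\psi]:=W(\psi_1,\dots,\psi_r,\psi)$, with the convention $W_0=1$, and I assume (as the statement implicitly requires) that the relevant Wronskians do not vanish.

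For the base case $N=1$ one has $W_1=\psi_1$ and $W_1[\psi]=\psi_1\partial_x\psi-\psi\,\partial_x\psi_1$, so $W_1[\psi]/W_1=(\partial_x-\partial_x\log\psi_1)\psi=\psi[1]$, and the $r=1$ term of the sum defining $v[N]$ is exactly the correction appearing in \eqref{Ns}; thus $N=1$ is literally Theorem~1. For the inductive step, assume the assertion for $N-1$. Then $\psi[N-1]=W_{N-1}[\psi]/W_{N-1}$ solves $L[N-1]\psi[N-1]=\psi_y[N-1]$ and $\psi_t[N-1]=M[N-1]\psi[N-1]$, with $u[N-1],v[N-1]$ given by \eqref{Nsol} (with $N-1$ in place of $N$). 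Applying the same hypothesis with the generic eigenfunction $\psi$ replaced by the particular solution $\psi_N$ of \eqref{laxpair}, the function $\chi:=\psi_N[N-1]=W_N/W_{N-1}$ is a nonzero particular solution of the $(N-1)$-transformed Lax pair. Now apply Theorem~1 to that Lax pair with seed $\chi$: it yields a new eigenfunction $\psi[N]=(\partial_x-\partial_x\log\chi)\psi[N-1]$ satisfying $L[N]\psi[N]=\psi_y[N]$ and $\psi_t[N]=M[N]\psi[N]$, with $u[N]=u[N-1]+3\Lambda(t)\,\partial_x^2\log\chi$ and $v[N]=v[N-1]+(u[N-1])_x+3\Lambda(t)\big(\partial_x\log\chi\,\partial_x^2\log\chi+\partial_x^3\log\chi\big)$.

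Two algebraic facts then close the induction. First, $\psi[N]$ equals the claimed Wronskian ratio, i.e. $(\partial_x-\partial_x\log\chi)\dfrac{W_{N-1}[\psi]}{W_{N-1}}=\dfrac{W_N[\psi]}{W_N}$ when $\chi=W_N/W_{N-1}$. I would prove this by observing that $\psi\mapsto W_r[\psi]/W_r$ is the unique monic $r$-th order linear differential operator in $\psi$ that annihilates $\psi_1,\dots,\psi_r$ (expand the determinant along the column containing $\psi$): the composite operator $(\partial_x-\partial_x\log\chi)\circ\big(W_{N-1}[\,\cdot\,]/W_{N-1}\big)$ is monic of order $N$, it kills $\psi_1,\dots,\psi_{N-1}$ by construction, and it kills $\psi_N$ because $(\partial_x-\partial_x\log\chi)\chi=0$; hence it must coincide with $W_N[\,\cdot\,]/W_N$. (Equivalently, one can invoke the Jacobi/Desnanot-Sylvester identity for the minors of the bordered Wronskian.) Second, the potentials telescope. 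Summing $u[r]-u[r-1]=3\Lambda(t)\,\partial_x^2\log(W_r/W_{r-1})$ over $r=1,\dots,N$ collapses (using $W_0=1$) to $u[N]=u+3\Lambda(t)\,\partial_x^2\log W_N$. For $v$, substituting the already-established $u[r-1]=u+3\Lambda(t)\,\partial_x^2\log W_{r-1}$ gives $(u[r-1])_x=u_x+3\Lambda(t)\,\partial_x^3\log W_{r-1}$, so the accumulated corrections produce the term $Nu_x$ together with $3\Lambda(t)\sum_{r=1}^N\partial_x^3\log W_{r-1}+3\Lambda(t)\sum_{r=1}^N\partial_x^3\log(W_r/W_{r-1})$, and these two sums recombine to $3\Lambda(t)\sum_{r=1}^N\partial_x^3\log W_r$; adjoined to the surviving quadratic terms $3\Lambda(t)\sum_{r=1}^N\partial_x\log(W_r/W_{r-1})\,\partial_x^2\log(W_r/W_{r-1})$, this is precisely \eqref{Nsol}.

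I expect the determinant bookkeeping to be the only real obstacle. The spectral content — that $\psi[N]$ obeys a Lax pair of exactly the same form with the shifted potentials $u[N],v[N]$, and therefore, by the compatibility condition, that $u[N],v[N]$ solve the DFCB equation \eqref{dfcb} — needs no new idea, since it is inherited verbatim from Theorem~1 at each stage. The work lies in establishing the Crum identity for the bordered Wronskians and then checking that the $\partial_x^3\log$ terms reorganize, after $Nu_x$ is extracted, into the single sum $\sum_{r=1}^N\partial_x^3\log W_r$; both are purely algebraic (uniqueness of the monic annihilating operator, or Jacobi's identity, plus careful telescoping), but they carry essentially all of the content.
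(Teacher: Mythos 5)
Your proposal is correct and takes essentially the same route as the paper: induction on $N$, applying the one-fold transformation of Theorem~1 at each stage with seed $\chi=W(\psi_1,\dots,\psi_N)/W(\psi_1,\dots,\psi_{N-1})$, identifying the iterated transform with the bordered-Wronskian ratio, and telescoping the potentials so that the $u_x[r-1]$ contributions recombine the $\partial_x^3\log$ terms into $\sum_r \partial_x^3\log W(\psi_1,\dots,\psi_r)$. The only difference is that you sketch an actual justification of the Crum-type identity (uniqueness of the monic annihilating operator, or Jacobi's determinant identity), which the paper simply asserts when it substitutes $\psi[K]$ and $\psi_K[K]$ into the one-fold formula — an added detail, not a different method.
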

\begin{proof}
We use the induction theory to prove the above result for $N$-fold Darboux transformation. For $N=1$, we proved in theorem 1, the result is true for one-fold Darboux transformation. Now we shall prove the result is true for $N=2$. Again we apply the one-fold Darboux transformation in equation \eqref{nlaxpair} defined as
\begin{equation}
\psi[2]=(\partial_x-\partial_x\log\psi_1[1])\psi[1] \label{2DT}
\end{equation}
Where $\psi_1[1]$, is the nonzero particular solution of \eqref{nlaxpair}. The value of $\psi_1[1]$, one can obtained by \eqref{DT} is $\psi_1[1]=\frac{W(\psi_{1},\psi_{2})}{\psi_{1}}$. Here $\psi_{2}$ is the non zero particular solution of \eqref{laxpair}. By equation \eqref{DT}, the expression of $\psi[2]$ in equation \eqref{2DT} can be written as in the following form
\begin{equation}
\psi[2]=\frac{W(\psi_{1},\psi_{2},\psi)}{W(\psi_{1},\psi_{2})},
\end{equation}
which is called 2-fold Darboux transformation on $\psi$ in \eqref{laxpair}. Then the new potentials becomes
\begin{equation}
\begin{split}
&u[2]=u[1]+3\Lambda(t)\partial^{2}_x\log\psi_1[1],~
v[2]=v[1]+u_x[1]+3\Lambda(t)(\partial_x\log\psi_1[1]\partial^{2}_x\log\psi_1[1]+\partial^{3}_x\log\psi_1[1]).
\end{split}
\end{equation}
Subtitutiong the values of $u[1]$, $v[1]$, and  $\psi_1[1]$ into above expression, we have
\begin{equation}
\begin{split}
&u[2]=u+3\Lambda(t)\partial^{2}_{x}\log W(\psi_{1},\psi_{2}),\\
&v[2]=v+2u_x+3\Lambda(t)\sum_{r=1}^{2}\bigg(\partial_x\log \frac{W(\psi_1,\psi_2,\cdots,\psi_r)}{W(\psi_1,\psi_2,\cdots,\psi_{r-1})}\partial^{2}_x\log\frac{W(\psi_1,\psi_2,\cdots,\psi_r)}{W(\psi_1,\psi_2,\cdots,\psi_{r-1})}+\partial^{3}_x\log W(\psi_1,\psi_2,\cdots,\psi_r)\bigg)
\end{split}
\end{equation}
Thus the result is true for $N=2$.\\
 We proved the Darboux transformation result is true for $N=1$ and $N=2$, we shall assume the result is true for $N=K$, therefore for $K$- fold Darboux transformation on $\psi$ in \eqref{laxpair} is	
\begin{equation}
\psi[K]=\frac{W(\psi_{1},\psi_{2},\psi_{3},...\psi_{K},\psi)}{W(\psi_{1},\psi_{2},\psi_{3},...\psi_{K})} \label{KDT}
\end{equation}
Thus $\psi[K]$ obey the followings equations
\begin{equation}
\begin{split}
 L[K]\psi[K]=\psi_y[K],\hspace{2mm}\psi_t[K]=M[K]\psi[K] 
 \label{covlaxpairNN}
\end{split}
\end{equation}
where the new potential becomes after $K$-fold Darboux transformation
\begin{equation}
 \begin{split}
 &u[K]=u+3\Lambda(t)\partial^{2}_{x}\log W(\psi_{1},\psi_{2},\psi_{3},\cdots,\psi_{K}),\\
 &v[K]=v+Ku_x+3\Lambda(t)\sum_{r=1}^{K}\bigg(\partial_x\log \frac{W(\psi_1,\psi_2,\cdots,\psi_r)}{W(\psi_1,\psi_2,\cdots,\psi_{r-1})}\partial^{2}_x\log\frac{W(\psi_1,\psi_2,\cdots,\psi_r)}{W(\psi_1,\psi_2,\cdots,\psi_{r-1})}+\partial^{3}_x\log W(\psi_1,\psi_2,\cdots,\psi_r)\bigg)
 \end{split}                              \label{newsolNN}
\end{equation}
and $W(\psi_{1},\psi_{2},\psi_{3},\cdots,\psi_{K})=\text{det}(\theta_1, \theta_2, \theta_3,\cdots, \theta_K)$, $\theta_j=(\psi_j,\partial_{x}\psi_j,\partial^2_{x}\psi_j,\cdots,\partial^{K-1}_{x}\psi_j)^T$ is the column vector forming by $\psi_j$ and its partial derivatives upto $(K-1)$ th order with respect to the variable $x$ , $j=1,2,3,\cdots, N$ and $\psi_{1}$, $\psi_{2}$, $\psi_{3}$, $\cdots$, $\psi_{K}$ are the nonzero solutions of \eqref{laxpair}.\\
The one-fold Darboux transformation on $\psi[K]$ in \eqref{covlaxpairNN} defined as
\begin{equation}
\psi[K+1]=(\partial_x-\partial_x\log\psi_K[K])\psi[K],\label{DTK}
\end{equation}
where $\psi_K[K]$ is the non zero particular solution of \eqref{covlaxpairNN}. Thus the value of $\psi_K[K]$ one can obtained through \eqref{KDT} as
\begin{equation}
\psi_K[K]=\frac{W(\psi_{1},\psi_{2},\psi_{3},...,\psi_{K},\psi_{K+1})}{W(\psi_{1},\psi_{2},\psi_{3},...,\psi_{K})}
\end{equation}
where $\Psi_{K+1}$ be the nonzero particular solution of \eqref{laxpair}. Substituing the values of  $\psi[K]$ and $\psi_K[K]$  into \eqref{DTK}, thus we obtained $(K+1)$-fold Darboux transformation on $\Psi$ as 
\begin{equation}
\Psi[K+1]=\frac{W(\psi_{1},\psi_{2},\psi_{3},...,\psi_{K+1},\psi)}{W(\psi_{1},\psi_{2},\psi_{3},...\psi_{K+1})} \label{MKDT}
\end{equation}
Therefore $\psi[K+1]$ obey the following equations
\begin{equation}
\begin{split}
L[K+1]\psi[K+1]=\psi_y[K+1],\hspace{2mm}\psi_t[K+1]=M[K+1]\psi[K+1] 
 \label{Mcovlaxpair}
\end{split}
\end{equation}
thus the new potential becomes after (K+1)-fold Darboux transformation as
\begin{equation}
\begin{split}
u[K+1]=&u[K]+3\Lambda(t)\partial^{2}_{x}\log (\psi_K[K]),  \\
=&u+3\Lambda(t)\partial^{2}_{x}\log W(\psi_{1},\psi_{2},\psi_{3},\cdots,\psi_{K})\\&+3\Lambda(t)\partial^{2}_{x}\log\bigg(\frac{W(\psi_{1},\psi_{2},\psi_{3},...\psi_{K},\psi_{K+1})}{W(\psi_{1},\psi_{2},\psi_{3},...\psi_{K})}\bigg),\\
=&u+3\Lambda(t)\partial^{2}_{x}\log W(\psi_{1},\psi_{2},\psi_{3},\cdots,\psi_{K+1}).
\end{split}                                             \label{Knewsol}
\end{equation}
and 
\begin{equation}
\begin{split}
&v[K+1]=v[K]+u_x[K]+ 3\Lambda(t)(\partial_x\log \psi_K[K]\partial^{2}_x\log \psi_K[K]+\partial^{3}_x\log \psi_K[K])
\end{split}
\end{equation}
By using the values of $v[K]$, $u[K]$, and $\psi_K[K]$, the above expression reduced to 
\begin{equation}
v[K+1]=v+(K+1)u_x+3\Lambda(t)\sum_{r=1}^{K+1}\bigg(\partial_x\log \frac{W(\psi_1,\psi_2,\cdots,\psi_r)}{W(\psi_1,\psi_2,\cdots,\psi_{r-1})}\partial^{2}_x\log\frac{W(\psi_1,\psi_2,\cdots,\psi_r)}{W(\psi_1,\psi_2,\cdots,\psi_{r-1})}+\partial^{3}_x\log W(\psi_1,\psi_2,\cdots,\psi_r)\bigg)
\end{equation}
Thus the result is true for $N=K+1$.\\
This complete the proof.
\end{proof}
\section{Solution of the damped forced Burger's equation}
\subsection{Solution using one fold Darboux transformation}
It is easy to check that $u(x,y,t)=H(t)$, $v(x,y,t)=H(t)$ is trivial  solution of the DFCB  equations \eqref{dfcb}. Substituting the trivial solutions $u(x,y,t)=H(t)$ and $v(x,y,t)=H(t)$ into linear equations \eqref{laxpair}, then we solve the systems for nonzero solution $\psi_{1}$. Thus we have
\begin{equation}
\psi_{1}(x,y,t)=c_1\text{e}^{\xi_1}+c_2\text{e}^{\xi_2}\text{sin}(\xi_3)+c_3\text{e}^{\xi_2}\text{cos}(\xi_3),
\end{equation}
where $\xi_1=k_1x+k_1^3y+(2k_1^3+\frac{3}{2}k_1^2)t,~ \xi_2=k_1x-2k_1^3y-4k_1^3t,$ and $\xi_3=k_1x+2k_1^3y+(4k_1^3+3k_1^2)t$. Here $c_1, ~c_2,~c_3 $ and $k_1$ are arbitrary real value. Substituting $\psi_1$ into equation \eqref{Ns} and labeled, we have the new nontrivial solution of the DFCB equation as
\begin{equation}
\begin{split}
&u[1]=H(t)-\frac{3\Lambda(t) \Theta_1}{\Theta^2},~~
v[1]=H(t)-\frac{3\Lambda(t)\Theta_2}{2\Theta^3}
\end{split}
\end{equation}
where
\begin{equation*}
\begin{split}
&\Theta_2=k_1^3\text{e}^{d_2}\bigg [  3c_1c_2^2\text{e}^{d_1+d_2}+3c_1c_3^2\text{e}^{d_1+d_2}+2\bigg ( c_1^2(c_2+c_3)\text{e}^{2d_1}+(c_3^3-c_2^3+c_2^2c_3-c_2c_3^2)\text{e}^{2d_2}\bigg)\cos\xi_3\\
&~~~~-(c_1c_2^2-c_1c_3^2)\text{e}^{d_1+d_2}\cos 2\xi_3+2(c_1^2c_2-c_1^2c_3)\text{e}^{2d_1}\sin \xi_3+2c_2^3\text{e}^{2d_2}\sin \xi_3+2c_2^2c_3\text{e}^{2d_2}\sin \xi_3\\&~~~~+2c_2c_3^2\text{e}^{2d_2}\sin \xi_3+2c_3^3\text{e}^{2d_2}\sin \xi_3+2c_1c_2c_3\text{e}^{d_1+d_2}\sin 2\xi_3\bigg],
~\Theta=(c_1\text{e}^{d_1}+c_2\text{e}^{d_2}\sin \xi_3+c_3\text{e}^{d_2}\cos \xi_3),\\
&\Theta_1=k_1^2\text{e}^{d_2}\bigg[(c_2^2+c_3^2)\text{e}^{d_2}+c_1c_3\text{e}^{d_1}\cos\xi_3+c_1c_2\text{e}^{d_1}\cos\xi_3\bigg],
~d_1=k_1^3y+(2k_1^3+\frac{3}{2}k_1^2)t,
~d_2=-2k_1^3y-4k_1^3t.
\end{split}
\end{equation*}
\subsection{Solution using two fold Darboux transformation}
In this section we construct wave solution of  of the DFCB  equations \eqref{dfcb} through by two-folded Darboux transformation with intial solution trivial solution $u(x,y,t)=H(t)$, $v(x,y,t)=H(t)$. Substituting the trivial solutions $u(x,y,t)=H(t)$ and $v(x,y,t)=H(t)$ into linear equations \eqref{laxpair}, then we solve the systems for nonzero solution $\psi_{1}$ and $\psi_{2}$. Thus we have
\begin{equation}
\begin{split}
&\psi_{1}(x,y,t)=c_1\text{e}^{\xi_1}+c_2\text{e}^{\xi_2}\text{sin}(\xi_3)+c_3\text{e}^{\xi_2}\text{cos}(\xi_3),~
\psi_{2}(x,y,t)=p_1\text{e}^{\delta_1}+p_2\text{e}^{\delta_2}\text{sin}(\delta_3)+p_3\text{e}^{\delta_2}\text{cos}(\delta_3)\\
\end{split}
\end{equation}
where $\delta_1=k_2x+k_2^3y+(2k_2^3+\frac{3}{2}k_2^2)t,~ \delta_2=k_2x-2k_2^3y-4k_2^3t,$ and $\delta_3=k_2x+2k_2^3y+(4k_2^3+3k_2^2)t$. Here $p_1, ~p_2,~p_3 $ and $k_2$ are arbitrary real value. For $N=2$, through equation \eqref{Nsol} the expession of new solution $u[2]$ and $v[2]$ after two fold Darboux transformation become
\begin{equation}
\begin{split}
&u[2]=u+3\Lambda(t)\partial^{2}_{x}\log W(\psi_{1},\psi_{2}),\\
&v[2]=v+2u_x+3\Lambda(t)\bigg(\partial_x\log\psi_1\partial^{2}_x\log\psi_1+\partial^{3}_x\log\psi_1  +\partial_x\log \frac{W(\psi_1,\psi_2)}{\psi_1}\partial^{2}_x\log\frac{W(\psi_1,\psi_2)}{\psi_1}+\partial^{3}_x\log W(\psi_1,\psi_2\bigg)\label{2S}
\end{split}
\end{equation}
Substituting $\psi_1$ and $\psi_2$ into the above equation \eqref{2S} and labeled, we have the new nontrivial solution of the DFCB equation as
\begin{equation}
\begin{split}
&u[2]=H(t)+\frac{3\Lambda(t)\Theta_3}{\Theta_4},~~
v[2]=H(t)+\frac{3\Lambda(t)\Gamma}{\Theta^3\Theta_3\Theta_4^2\Theta_6}
\end{split}
\end{equation}
where
\begin{equation}
\begin{split}
\Gamma&=\Theta\Theta_3^2\Theta_4\Theta_5-\Theta^2\Theta_3^2\Theta_4\Theta_6\Theta_7+\Theta\Theta_1\Theta_3\Theta_4\Theta_4^2\Theta_5-\Theta_1\Theta_3\Theta_4^2\Theta_6\Theta_7\\&-\Theta^3\Theta_2\Theta_4^2\Theta_6+\Theta^3\Theta_3\Theta_4\Theta_6\Theta_{3,x}-\Theta^3\Theta_3^2\Theta_6\Theta_{4,x},\\
\Theta_3&=-\bigg[ k_1^2(c_1\text{e}^{d_1}+2c_2\text{e}^{d_2}\cos\xi_3-2c_3\text{e}^{d_2}\sin\xi_3)(p_1\text{e}^{r_1}+p_2\text{e}^{r_2}\sin\delta_3+p_3\text{e}^{r_2}\cos\delta_3)\\
&-k_2^2(c_1\text{e}^{d_1}+c_3\text{e}^{d_2}\cos\xi_3+c_2\text{e}^{d_2}\sin\xi_3)(p_1\text{e}^{r_1}+2p_2\text{e}^{r_2}\cos\delta_3-2p_3\text{e}^{r_2}\sin\delta_3)\bigg]^2\\
&+\bigg[-k_1(c_1\text{e}^{d_1}+(c_2+c_3)\text{e}^{d_2}\cos\xi_3+(c_2-c_3)\text{e}^{d_2}\sin\xi_3)(p_1\text{e}^{r_1}+p_2\text{e}^{r_2}\sin\delta_3+p_3\text{e}^{r_2}\cos\delta_3)\\
&+k_2(c_1\text{e}^{d_1}+c_3\text{e}^{d_2}\cos\xi_3+c_2\text{e}^{d_2}\sin\xi_3)(p_1\text{e}^{r_1}
+p_2\text{e}^{r_2}(\cos\delta_3+\sin\delta_3) +p_3\text{e}^{r_2}(\cos\delta_3-\sin\delta_3))\bigg]\\
&\times\bigg[-k_1^3(c_1\text{e}^{d_1}+2(c_2-c_3)\text{e}^{d_2}\cos\xi_3-2(c_2+c_3)\text{e}^{d_2}\sin\xi_3) (p_1\text{e}^{r_1}+p_2\text{e}^{r_2}\sin\delta_3+p_3\text{e}^{r_2}\cos\delta_3)\\
&-k_1^2k_2(c_1\text{e}^{d_1}+2c_2\text{e}^{d_2}\cos\xi_3-2c_3\text{e}^{d_2}\sin\xi_3)(p_1\text{e}^{r_1} +p_2\text{e}^{r_2}(\cos\delta_3+\sin\delta_3) +p_3\text{e}^{r_2}(\cos\delta_3-\sin\delta_3))\\
&+k_1k_2^2(c_1\text{e}^{d_1}+(c_2+c_3)\text{e}^{d_2}\cos\xi_3+(c_2-c_3)\text{e}^{d_2}\sin\xi_3)(p_1\text{e}^{r_1}+2p_2\text{e}^{r_2}\cos\delta_3-2p_3\text{e}^{r_2}\sin\delta_3)\\
&+k_2^3(c_1\text{e}^{d_1}+c_3\text{e}^{d_2}\cos\xi_3+c_2\text{e}^{d_2}\sin\xi_3)(p_1\text{e}^{r_1} +2p_2\text{e}^{r_2}(\cos\delta_3-\sin\delta_3) -2p_3\text{e}^{r_2}(\cos\delta_3+\sin\delta_3))\bigg],\\
\end{split}
\end{equation}
\begin{equation}
\begin{split}
&\Theta_4=\bigg[ k_1(c_1\text{e}^{d_1}+(c_2+c_3)\text{e}^{d_2}\cos\xi_3+(c_2-c_3)\text{e}^{d_2}\sin\xi_3)(p_1\text{e}^{r_1}+p_2\text{e}^{r_2}\sin\delta_3+p_3\text{e}^{r_2}\cos\delta_3)\\
&~~~~-k_2(c_1\text{e}^{d_1}+c_3\text{e}^{d_2}\cos\xi_3+c_2\text{e}^{d_2}\sin\xi_3)(p_1\text{e}^{r_1} +p_2\text{e}^{r_2}(\cos\delta_3+\sin\delta_3) +2p_3\text{e}^{r_2}(\cos\delta_3-\sin\delta_3))\bigg]^2,\\
&\Theta_5=-k_1^2(c_1\text{e}^{d_1}+2c_2\text{e}^{d_2}\cos\xi_3-2c_3\text{e}^{d_2}\sin\xi_3)(p_1\text{e}^{r_1}+p_2\text{e}^{r_2}\sin\delta_3+p_3\text{e}^{r_2}\cos\delta_3)\\
&~~~~+k_2^2(c_1\text{e}^{d_1}+c_3\text{e}^{d_2}\cos\xi_3+c_2\text{e}^{d_2}\sin\xi_3)(p_1\text{e}^{r_1}+2p_2\text{e}^{r_2}\cos\delta_3-2p_3\text{e}^{r_2}\sin\delta_3),  \\
&\Theta_6=-k_1(c_1\text{e}^{d_1}+(c_2+c_3)\text{e}^{d_2}\cos\xi_3+(c_2-c_3)\text{e}^{d_2}\sin\xi_3)(p_1\text{e}^{r_1}+p_2\text{e}^{r_2}\sin\delta_3+p_3\text{e}^{r_2}\cos\delta_3)\\
&~~~~+k_2(c_1\text{e}^{d_1}+c_3\text{e}^{d_2}\cos\xi_3+c_2\text{e}^{d_2}\sin\xi_3)(p_1\text{e}^{r_1}+(p_2+p_3)\text{e}^{r_2}\cos\xi_3+(p_2-p_3)\text{e}^{r_2}\sin\xi_3),\\
&\Theta_7=c_1\text{e}^{d_1}+(c_2+c_3)\text{e}^{d_2}\cos\xi_3+(c_2-c_3)\text{e}^{d_2}\sin\xi_3,~
r_1=k_2^3y+(2k_2^3+\frac{3}{2}k_2^2)t,~
r_2=-2k_2^3y-4k_2^3t.
\end{split}
\end{equation}
\section{Conclusions}
The  aim of the present article is to investigate the integrability of the  (2+1)-dimensional damping forcing coupled Burgers equation in the sense of Lax pair. The (2+1)-dimensional damping forcing coupled Burgers equation is obtained by adding damping and forcing terms from couple Burgers equation. Various physical phenomena in fluid dynamics, traffic flow, nonlinear wave can be describle by couple Burgers equation. The damping and forcing terms can exhibit complex behavior in nonlinear couple Burgers equation. The damping and forcing terms has significantly impact on nonlinear evolution equation such as damping term dissipate the energy, stabilize the unstable solution, reduces the wave amplitude as well as forcing term inject energy into the system. The Lax pair in differential form have been constructed of (2+1)-dimensional damping forcing coupled Burgers equation. The expression of  $N$-fold Darboux transformation have been derived by using Lax pair, where $N$ being a positive integer. Using the trivial solution of (2+1)-dimensional damping forcing coupled Burgers equation, one fold and two fold Darboux transformation gives some exact solutions like solitary wave solution and periodic solution.  \\\\
\noindent
\textbf{Acknowledgement:}% Mr. Nanda Kanan Pal is thankful to the University Grants Commission (UGC), India, for providing the financial support under the Junior Research Fellowship program (NTA Ref. No. -211610115390).


\begin{thebibliography}{99}
		\bibitem{darbox_1}  V.B. Matveev, M.A. Salle, Darboux Transformation and Solitons (Springer, Berlin, 1991).
		\bibitem{inv1} M.J. Ablowitz, H. Segur, Solitons and the Inverse Scattering Transform, SIMA, Philadephia, 1981.
		\bibitem{AKNS} Ablowitz, M. J., D. J. Kaup, A. C. Newell, and H. Segur, (1974). The
		inverse scattering transform - Fourier analysis for nonlinear problems, Stud. Appl.
		Math. 53 249-315.
		\bibitem{hirota1} R Hirota, The Direct Method in Soliton Theory (Cambridge University Press, Cambridge, 2004)
		\bibitem{algeb1} C.W. Cao, Parametric representation of the finite-band solution of the Heisenberg equation, Phys. Lett. A 184 (1994)
		333–338.
		333–338.
		\bibitem{algeb2} X.G. Geng, H.H. Dai, Quasi-periodic solutions for some 2+1-dimensional discrete models, Physica A 319 (2003) 270–294.
		\bibitem{sk} S. Kumar, D. Kumar, Int. J. Dynam. Control 7 (2019) 496.
		\bibitem{Tingsu}T. Su, Explicit solutions for a modified 2+1-dimensional coupled Burgers
		equation by using Darboux transformation, App. Math. Lett., 69 (2017) 15–21
		\bibitem{P.D.Lax} P.D. Lax, Integrals of nonlinear equations of evolution and solitary waves, Comm. Pure and Appl. Math. 21 (5) (1968) 467-490.
		\bibitem{nk1} N.K. Pal, P. Chatterjee, A.M. Wazwaz, S. Raut Lax pairs, infinite conservation laws of stochastic potential Korteweg-de Vries equation and their solutions by using Darboux transformation, Pramana-J. Phys. (2025) 99:5.
		\bibitem{nk2} N.K. Pal, S. Nasipuri, P. Chatterjee, S. Raut, Bilinear Bäcklund transformation, Lax pair, Darboux transformation, multi-soliton, periodic wave, complexiton, higher-order breather and rogue wave for geophysical
		Boussinesq equation, Pramana – J. Phys. (2024) 98:110.
		\bibitem{pc}P. Chatterjee, N.K. Pal, Gaston M. N'Guérékata, Asit Saha, Multiple tsunami waves of the fractional geophysical KdV equation, Applicable Analysis, 2025, 1487-1507.
			
	\end{thebibliography}
\end{document}